\title{Global offensive $k$-alliances in digraphs}
\author{Doost Ali Mojdeh$^1$, Babak Samadi$^2$ and Ismael G. Yero$^3$\\[0.5cm]
Department of Mathematics, University of Mazandaran,\\ Babolsar, Iran$^{1,2}$\\
{\it damojdeh@umz.ac.ir$^1$, } {\it samadibabak62@gmail.com$^2$}\\[0.2cm]
Departamento de Matem\'{a}ticas, Universidad de C\'{a}diz,\\ Algeciras, Spain$^3$\\
{\it ismael.gonzalez@uca.es$^3$}}
\date{}
\newtheorem{theorem}{Theorem}[section]
\newtheorem{corollary}[theorem]{Corollary}
\newtheorem{lemma}[theorem]{Lemma}
\newtheorem{proposition}[theorem]{Proposition}
\theoremstyle{definition}
\newtheorem{definition}[theorem]{Definition}
\theoremstyle{remark}
\begin{document}

\maketitle

\begin{abstract}

\noindent \ \ \
In this paper, we initiate the study of global offensive $k$-alliances in digraphs. Given a digraph $D=(V(D),A(D))$, a global offensive $k$-alliance in a digraph $D$ is a subset $S\subseteq V(D)$ such that every vertex outside of $S$ has at least one in-neighbor from $S$ and also at least $k$ more in-neighbors from $S$ than from outside of $S$, by assuming $k$ is an integer lying between two minus the maximum in-degree of $D$ and the maximum in-degree of $D$. The global offensive $k$-alliance number $\gamma_{k}^{o}(D)$ is the minimum cardinality among all global offensive $k$-alliances in $D$. In this article we begin the study of the global offensive $k$-alliance number of digraphs. For instance, we  prove that finding the global offensive $k$-alliance number of digraphs $D$ is an NP-hard problem for any value $k\in \{2-\Delta^-(D),\dots,\Delta^-(D)\}$ and that it remains NP-complete even when restricted to bipartite digraphs when we consider the non-negative values of $k$ given in the interval above. Based on these facts, lower bounds on $\gamma_{k}^{o}(D)$ with characterizations of all digraphs attaining the bounds are given in this work. We also bound this parameter for bipartite digraphs from above. For the particular case $k=1$, an immediate result from the definition shows that $\gamma(D)\leq \gamma_{1}^{o}(D)$ for all digraphs $D$, in which $\gamma(D)$ stands for the domination number of $D$. We show that these two digraph parameters are the same for some infinite families of digraphs like rooted trees and contrafunctional digraphs. Moreover, we show that the difference between $\gamma_{1}^{o}(D)$ and $\gamma(D)$ can be arbitrary large for directed trees and connected functional digraphs.
\end{abstract}
{\bf Keywords:} Global offensive $k$-alliance; domination number; bipartite digraph.\vspace{1mm}\\
{\bf MSC 2010:} 05C20, 05C69.


\section{Introduction and preliminaries}

Throughout this paper, we consider $D=(V(D),A(D))$ as a finite digraph with vertex set $V(D)$ and arc set $A(D)$ with neither loops nor multiple arcs (although pairs of opposite arcs are allowed). Also, $G=(V(G),E(G))$ stands for a simple finite graph with vertex set $V(G)$ and edge set $E(G)$. We use \cite{bg} and \cite{we} as references for some very basic terminology and notation in digraphs and graphs, respectively, which are not explicitly defined here.

For any two vertices $u,v\in V(D)$, we write $(u,v)$ as the \emph{arc} with direction from $u$ to $v$, and say $u$ is \emph{adjacent to} $v$, or $v$ is \emph{adjacent from} $u$. Given a subset $S$ of vertices of $D$ and a vertex $v\in V(D)$, the {\em in-neighborhood} of $v$ from $S$ ({\em out-neighborhood} of $v$ to $S$) is $N_S^{-}(v)=\{u\in S\mid(u,v)\in A(D)\}$ ($N_S^{+}(v)=\{u\in S\mid(v,u)\in A(D)\}$). The \emph{in-degree} of $v$ from $S$ is $deg_S^-(v)=|N_S^{-}(v)|$ and the \emph{out-degree} of $v$ to $S$ is $deg_S^+(v)=|N_S^{+}(v)|$. Moreover, $N_S^{-}[v]=N_{S}^{-}(v)\cup\{v\}$ ($N_S^{+}[v]=N_{S}^{+}(v)\cup\{v\}$) is the {\em closed in-neighborhood} ({\em closed out-neighborhood}) of $v$ from (to) $S$. In particular, if $S=V(D)$, then we simply say (closed) (in or out)-neighborhood and (in or out)-degree of $v$, and write $N_D^{-}(v)$, $N_D^{+}(v)$, $N_D^{-}[v]$, $N_D^{+}[v]$, $deg_D^-(v)$ and $deg_D^+(v)$ instead of $N_{V(D)}^{-}(v)$, $N_{V(D)}^{+}(v)$, $N_{V(D)}^{-}[v]$, $N_{V(D)}^{+}[v]$, $deg_{V(D)}^-(v)$ and $deg_{V(D)}^+(v)$, respectively (we moreover remove the subscripts $D$, $V(D)$ if there is no ambiguity with respect to the digraph $D$). Given two sets $A$ and $B$ of vertices of $D$, by $(A,B)_D$ we mean the sets of arcs of $D$ going from $A$ to $B$. For a graph $G$, $\Delta=\Delta(G)$ and $\delta=\delta(G)$ represent the maximum and minimum degrees of $G$. In addition, for a digraph $D$, ($\Delta^{+}=\Delta^+(D)$ and $\delta^{+}=\delta^+(D)$) $\Delta^{-}=\Delta^-(D)$ and $\delta^{-}=\delta^-(D)$ represent the maximum and minimum (out-degrees) in-degrees of $D$.

We denote the {\em converse} of a digraph $D$ by $D^{-1}$, obtained by reversing the direction of every arc of $D$. A {\em biorientation} of a graph $G$ is a digraph $D$ which is obtained from $G$ by replacing each edge $xy$ by either $(x,y)$ or $(y,x)$ or the pair $(x,y)$ and $(y,x)$. While a {\em complete biorientation} $D$ of $G$ is obtained by replacing each edge $xy$ by the pair of arcs $(x,y)$ and $(y,x)$. A digraph $D$ is {\em connected} if its underlying graph is connected. A {\it directed tree} is a digraph in which its underlying graph is a tree. A {\em rooted tree} is a connected digraph with a vertex of in-degree $0$, called the {\em root}, such that every vertex different from the root has in-degree $1$. In general, we call a vertex with in-degree $0$ (out-degree $0$) in a digraph $D$ a {\em source} ({\em sink}). A digraph is {\em functional} ({\em contrafunctional}) if every vertex has out-degree (in-degree) $1$.

Given a graph $G$, a set $S\subseteq V(G)$ is a {\em dominating set} in $G$ if each vertex in $V(G)\setminus S$ is adjacent to a vertex in $S$. The {\em domination number} $\gamma(G)$ is the minimum cardinality of a dominating set in $G$. For more information about this concept the reader can consult \cite{hhs2}. The concept of domination in directed graphs was introduced by Fu \cite{fu}. A subset $S$ of the vertices of a digraph $D$ is called a {\em dominating set} if every vertex in $V(D)\setminus S$ is adjacent from a vertex in $S$. The {\em domination number} $\gamma(D)$ is the minimum cardinality of a dominating set in $D$.

Hedetniemi et al. \cite{hhk} introduced the concept of global offensive alliances in graphs. A subset $S\subseteq V(G)$ is said to be a {\em global offensive alliance} in $G$ if $|N[v]\cap S|\geq|N[v]\cap \overline{S}|$ for each $v\in \overline{S}$, where $\overline{S}$ is the complement of the set $S$ in $V(G)$. The {\em global offensive alliance number} $\gamma_{o}(G)$ is the minimum cardinality taken over all global offensive alliances in the graph $G$. As a generalization of such alliances, Shafique and Dutton \cite{sd1,sd2} defined the global offensive $k$-alliances in graphs. A set $S$ of vertices of a graph $G$ is called a {\em global offensive $k$-alliance} (GO$k$A for short) if $N[S]=V(G)$ and $|N(v)\cap S|\geq|N(v)\cap \overline{S}|+k$, for each $v\in \overline{S}$. The {\em global offensive $k$-alliance number} $\gamma_{k}^{o}(G)$ is the minimum cardinality of a GO$k$A in the graph $G$. For more information on global offensive ($k$-)alliances in graphs we suggest the surveys \cite{fernau-sur,alli-sur-other}.

Alliances in graphs have been a relatively popular research topic in graph theory in the last two decades, and a significant number of works dealing with them can be found through the literature. However, although the alliances are arising in a more natural way in a digraph than in a graph, the case of alliances in digraphs has not attracted the attention of any research till the recent work \cite{alli-def-dig}, where global defensive alliances in digraphs have been introduced. Consider now a social network (Twitter for instance) and an external entity which wants to spread some information in a positive sense, but that can be taken as false or as true by any user based on the number of opinions that can get from another users (if one receives more true opinions, it will take it as true, otherwise it will take it as false). Suppose that the entity gives the information to a set of users $S$ of the network. Hence, in order that the information will arrive in a true way to every user of the network, it is necessary that any other user $x\notin S$, that can hear the news from the elements in $S$, will have a larger number of connections inside the set $S$ than outside, otherwise, the information will be taken as false by $x$. Thinking in this way, it is readily observed that such a set $S$ must be a global offensive ($k$-)alliance in such network, that can be seen as such set of elements which are more influential among every one. For the sake of efficiency, the search of a minimum number of elements that can be used to spread such kind of information is then connected with precisely finding the global offensive ($k$-)alliance number of graphs. If such network uses directions in the connections (like in the case of the Twitter platform), then the definition of global offensive alliances in digraphs is clearly of interest for the study of these kinds of problems, and thus the following definition, and results concerning it are worthy.

\begin{definition}\label{Def1}
Let $D$ be a digraph and let $k\in\{2-\Delta^-(D), \dots , \Delta^-(D)\}$ be an integer. A set of vertices $S\subseteq V(D)$ is called a global offensive $k$-alliance (GO$k$A) in $D$ if $N^{+}[S]=V(D)$ and $deg_{S}^{-}(v)\geq deg_{\overline{S}}^{-}(v)+k$, for each $v\in \overline{S}$. The global offensive $k$-alliance number, denoted $\gamma_{k}^{o}(D)$, is defined as the minimum cardinality of a GO$k$A in $D$. We call the global offensive $1$-alliance (number) just {\em global offensive alliance \emph{(}number\emph{)}}, for short.
\end{definition}

In this paper, we first dedicate a section to the computational complexity of the problem of computing the global offensive $k$-alliance number of digraphs, by proving the NP-completeness of the respectively related decision problem. We next give several bounds on $\gamma_{k}^{o}(D)$ with some emphasis on the case $k=1$. For instance, we prove that $\gamma_{k}^{o}(D)$ can be bound from below by $(k+\delta^{-})n/(2\Delta^{+}+\delta^{-}+k)$ and characterize all digraphs $D$ attaining the lower bound for the specific case $k=1$. As a consequence of this result we improve a lower bound on $\gamma_{1}^o(G)=\gamma_{o}(G)$ (for graphs) given in \cite{sr}. Moreover, we show that $(n+n_{<k})/2$ is a sharp upper bound on $\gamma_{k}^{o}(D)$ for a bipartite digraph $D$, where $n_{<k}$ is the number of vertices of in-degree less than $k$. Also, we discuss some relationships between $\gamma_{1}^o(D)$ and $\gamma(D)$ with emphasis on (contra)functional digraphs and rooted trees.

From now on, given any parameter $\eta$ in a digraph $D$, a set of vertices of cardinality $\eta(D)$ is called an $\eta(D)$-set. Also, unless specifically stated, in the whole article we shall assume $k\in\{2-\Delta^-(D),\cdots,\Delta^-(D)\}$.


\section{Complexity issues}

One first basic observation with respect to the parameter above is the existent relationship between global offensive $k$-alliances of graphs and that of digraphs. Let $G$ be a graph and $D$ be a digraph obtained as a complete biorentation of $G$. We can immediately observe that a set of vertices $S$ is a global offensive $k$-alliance in $G$ if and only if $S$ is a global offensive $k$-alliance in $D$. This leads to the next result for which we omit its straightforward proof.

\begin{proposition}\label{graph-vs-digraph}
For any graph $G$ and any integer $k\in\{2-\Delta(G),\cdots,\Delta(G)\}$, $\gamma_{k}^{o}(G)=\gamma_{k}^{o}(D)$, where $D$ is the complete biorientation of $G$.
\end{proposition}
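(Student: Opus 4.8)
The plan is to show that the collection of global offensive $k$-alliances of $G$ and that of $D$ are literally the same family of subsets of $V(G)=V(D)$, from which the equality of the two parameters follows immediately by taking minimum cardinalities over that common family.

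First I would record the basic structural facts about the complete biorientation. By construction, each edge $xy\in E(G)$ is replaced in $D$ by the pair of arcs $(x,y),(y,x)$; consequently, for every vertex $v\in V(G)=V(D)$ one has $N_D^-(v)=N_D^+(v)=N_G(v)$, and in particular $deg_D^-(v)=deg_G(v)$ and $\Delta^-(D)=\Delta(G)$. This last identity is what guarantees that the admissible range $\{2-\Delta^-(D),\dots,\Delta^-(D)\}$ of $k$ for $D$ coincides with the range $\{2-\Delta(G),\dots,\Delta(G)\}$ assumed for $G$, so that the statement is meaningful for exactly the same set of values of $k$.

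Next, fix such a $k$ and an arbitrary $S\subseteq V(G)=V(D)$. Using $N_D^+(u)=N_G(u)$ for all $u\in S$, the condition $N^+[S]=V(D)$ is equivalent to $N[S]=V(G)$. Likewise, for every $v\in\overline{S}$ we have $N_D^-(v)=N_G(v)$, hence $deg_S^-(v)=|N(v)\cap S|$ and $deg_{\overline{S}}^-(v)=|N(v)\cap\overline{S}|$, so the inequality $deg_S^-(v)\ge deg_{\overline{S}}^-(v)+k$ required in $D$ is exactly the inequality $|N(v)\cap S|\ge|N(v)\cap\overline{S}|+k$ required in $G$. Therefore $S$ is a GO$k$A in $D$ if and only if $S$ is a GO$k$A in $G$, and since $\gamma_k^o(G)$ and $\gamma_k^o(D)$ are by definition the minimum cardinalities over these (now identical) families, we conclude $\gamma_k^o(G)=\gamma_k^o(D)$.

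There is essentially no genuine obstacle here; the statement is a routine unwinding of the two definitions. The only point that deserves a moment's care is verifying $\Delta^-(D)=\Delta(G)$, so that the hypothesis on $k$ transfers to $D$ without any change, after which the argument is purely a matter of rewriting the defining conditions.
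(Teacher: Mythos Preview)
Your proof is correct and follows exactly the approach the paper indicates: the paper itself notes that a set $S$ is a global offensive $k$-alliance in $G$ if and only if it is one in the complete biorientation $D$, and then explicitly omits the straightforward proof. Your argument is precisely the routine verification of this equivalence (including the observation $\Delta^-(D)=\Delta(G)$ needed to match the admissible ranges of $k$), so there is nothing to add.
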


Such a relationship is very useful for giving a complexity result for the problem of computing the global offensive alliance number of digraphs. On the other hand, the result is less useful while studying general digraphs, since only digraphs for which an arc $(u,v)$ exists if and only if the arc $(v,u)$ also exists can be considered.

We now consider the problem of deciding whether the global offensive $k$-alliance number of a digraph is less than a given integer. That is stated in the following decision problem.

$$\begin{tabular}{|l|}
  \hline
  \mbox{GLOBAL OFFENSIVE $k$-ALLIANCE PROBLEM}\\
  \mbox{INSTANCE: A digraph $D$, an integer $k\in \{2-\Delta^-(D),\dots,\Delta^-(D)\}$, and a positive integer $r$}\\
  \mbox{PROBLEM: Deciding whether $\gamma_{k}^{o}(D)$ is less than $r$}\\
  \hline
\end{tabular}$$

Proving the NP-completeness of the GLOBAL OFFENSIVE $k$-ALLIANCE PROBLEM (GO$k$-A PROBLEM for short) above can be easily done (and therefore omitted) by making use of the Proposition \ref{graph-vs-digraph}, and the fact the the decision problem concerning computing the global offensive $k$-alliance number of graphs is NP-complete (see \cite{fernau-off-comp}).

\begin{theorem}\label{NP-complete}
For a digraph $D$ and an integer $k\in \{2-\Delta^-(D),\dots,\Delta^-(D)\}$, the GO$k$-A PROBLEM is NP-complete.
\end{theorem}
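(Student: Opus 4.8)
The plan is to reduce from the known NP-complete problem of computing the global offensive $k$-alliance number of (undirected) graphs, established in~\cite{fernau-off-comp}, together with the bridge provided by Proposition~\ref{graph-vs-digraph}. First I would observe that membership in NP is immediate: given a candidate set $S\subseteq V(D)$, one can in polynomial time verify that $N^{+}[S]=V(D)$ and that $deg_{S}^{-}(v)\geq deg_{\overline{S}}^{-}(v)+k$ holds for every $v\in\overline{S}$, so $S$ serves as a polynomial-size certificate whenever $|S|<r$. The main content is therefore the hardness direction.

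For NP-hardness, I would take an arbitrary instance $(G,k,r)$ of the GLOBAL OFFENSIVE $k$-ALLIANCE PROBLEM on graphs (with $k\in\{2-\Delta(G),\dots,\Delta(G)\}$) and map it to the instance $(D,k,r)$ of the GO$k$-A PROBLEM on digraphs, where $D$ is the complete biorientation of $G$. This transformation is clearly computable in polynomial time, since $D$ has the same vertex set as $G$ and exactly twice as many arcs. Note that $\Delta^{-}(D)=\Delta(G)$, so the admissible range of $k$ for $D$ coincides with that for $G$, and the parameter $k$ is carried over unchanged. By Proposition~\ref{graph-vs-digraph} we have $\gamma_{k}^{o}(G)=\gamma_{k}^{o}(D)$, hence $\gamma_{k}^{o}(G)<r$ if and only if $\gamma_{k}^{o}(D)<r$; this establishes the correctness of the reduction. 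Since the source problem is NP-complete, so is the GO$k$-A PROBLEM.

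I do not anticipate a genuine obstacle here: the only point requiring a small amount of care is confirming that the range restriction on $k$ is respected under the biorientation (which it is, because in-degrees in $D$ equal degrees in $G$), and that the definition of GO$k$A in a digraph specializes, under complete biorientation, exactly to the graph definition with $N(v)\cap S$ replaced by $N_{S}^{-}(v)$ and $N[S]$ by $N^{+}[S]$ — precisely the content already recorded just before Proposition~\ref{graph-vs-digraph}. Thus the entire argument is the standard "restriction" style of reduction, and the proof amounts to assembling these observations.
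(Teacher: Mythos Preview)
Your proposal is correct and matches the paper's approach exactly: the paper explicitly omits the proof, noting only that it follows from Proposition~\ref{graph-vs-digraph} together with the NP-completeness of the graph version from~\cite{fernau-off-comp}. Your write-up simply fills in the routine details (NP membership and the biorientation reduction) that the authors left implicit.
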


We now center our attention into bipartite digraphs, and prove that the GO$k$-A PROBLEM remains NP-complete, even when restricted to such class of digraphs if we consider $k\in \{0,\dots,\Delta^+(D)\}$. By a {\em bipartite digraph} we mean a biorientation of a bipartite graph (see \cite{bg}). In order to deal with this, we make a reduction from the well-known exact cover by $3$-sets problem (EC3S problem). That is, we have a set $A$ of exactly $n$ different elements, where $n$ is a multiple of three, and exactly $n$ subsets of $A$ such that every subset contains exactly $3$ elements of $A$ and every element occurs in exactly $3$ sets.
It can be readily seen that at least $\frac{n}{3}$ sets are needed to cover all the $n$ elements. In this sense, it is well-known that deciding whether there are $\frac{n}{3}$ such sets is in fact NP-complete (see~\cite{garey}).

\begin{theorem}\label{NP-complete}
For a digraph $D$ and an integer $k\in \{0,\dots,\Delta^-(D)\}$, the GO$k$-A PROBLEM is NP-complete for bipartite digraphs.
\end{theorem}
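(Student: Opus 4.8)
The plan is to reduce from the EC3S problem as the excerpt sets up. Given an instance of EC3S with element set $A=\{a_1,\dots,a_n\}$ and collection $\mathcal{C}=\{C_1,\dots,C_n\}$ of $3$-element subsets (each $a_i$ lying in exactly three sets), I would construct a bipartite digraph $D$ whose vertex bipartition mirrors the incidence structure of the instance. Roughly, one side holds a vertex $c_j$ for each set $C_j$ and the other side holds a vertex $e_i$ for each element $a_i$, with an arc $(c_j,e_i)$ whenever $a_i\in C_j$; this already records the covering relation via out-neighborhoods of the $c_j$'s. To force the offensive $k$-alliance condition to behave like a covering condition, I would attach to each $e_i$ (and, symmetrically, wherever needed for the degree inequalities) a controlled number of pendant-type gadget vertices on the opposite side, calibrated so that $\Delta^-(D)$ and the local in-degrees make the inequality $deg_S^-(v)\ge deg_{\overline S}^-(v)+k$ at an element vertex $e_i$ equivalent to "$e_i$ has at least one in-neighbor among the chosen $c_j$'s", i.e. to $e_i$ being covered; the gadgets must also guarantee that every GO$k$A is forced to contain all the $c_j$-side vertices it should and none of the "forbidden" gadget vertices, so that the only freedom left is the choice of which sets to pick. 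Throughout, all arcs go between the two color classes, so $D$ is a biorientation of a bipartite graph, hence a bipartite digraph in the sense of \cite{bg}.

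The key steps, in order, are: (1) describe the construction precisely, including the exact multiplicities of gadget vertices as functions of $k$ and of the fixed degrees $3$ coming from EC3S, and verify $k\in\{0,\dots,\Delta^-(D)\}$ for the resulting digraph; (2) check that the $N^+[S]=V(D)$ requirement together with the gadgets forces any GO$k$A $S$ to include a prescribed "mandatory" set $M$ of vertices and to exclude a prescribed set of gadget vertices, so that $S\setminus M$ is essentially a sub-collection $\mathcal{C}'\subseteq\mathcal{C}$; (3) show the offensive inequality at each element vertex $e_i$ reduces exactly to $a_i$ being covered by $\mathcal{C}'$, and that the inequality is automatically satisfied (by the gadget calibration) at every gadget vertex regardless of the choice; (4) conclude that $D$ has a GO$k$A of size $r:=|M|+\tfrac{n}{3}$ if and only if the EC3S instance has an exact cover, noting that an exact cover of $A$ uses exactly $\tfrac{n}{3}$ sets and that any covering sub-collection of size $\tfrac{n}{3}$ must in fact be exact; (5) observe the construction is polynomial in $n$ and that membership of the GO$k$-A PROBLEM in NP is immediate (a purported GO$k$A is a polynomial-size certificate checkable in polynomial time), giving NP-completeness.

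The main obstacle I expect is step (1)–(3): getting a single gadget design whose parameters depend on $k$ in a clean way and that simultaneously (a) pins down the mandatory vertices, (b) renders the degree inequality at element vertices equivalent to coverage, and (c) is "inert" at every gadget vertex for all admissible $k$ in $\{0,\dots,\Delta^-(D)\}$, while keeping the whole digraph bipartite. In particular the case $k=0$ (where the inequality is just $deg_S^-(v)\ge deg_{\overline S}^-(v)$, a ceiling/majority-type condition rather than a strict one) and the largest admissible $k$ may need slightly different bookkeeping, and one must be careful that adding gadget vertices does not inflate $\Delta^-(D)$ in a way that pushes the intended $k$ outside the admissible interval. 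Once a gadget with the right "threshold" behavior is fixed, the equivalence with EC3S and the polynomial-time bound are routine. I would also double-check that the converse direction—from a GO$k$A of size $r$ back to an exact cover—genuinely forces exactness and not merely a cover, which is where the precise value $r=|M|+n/3$ and the $3$-regularity of the EC3S incidence structure are used.
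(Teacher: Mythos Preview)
Your plan is essentially the paper's own proof: reduce from EC3S, build a bipartite digraph with element vertices, set vertices, and pendant gadgets whose multiplicities depend on $k$, so that the pendants (having in-degree $0$) are forced into every GO$k$A and the offensive inequality at an element vertex collapses to ``at least one incident set vertex is selected''; then $r=|M|+n/3$ is attainable iff an exact cover exists. The paper's concrete calibration uses \emph{pairs of opposite arcs} between element and set vertices (not just the single arc $(c_j,e_i)$ you wrote), together with $k+2$ in-pendants at each element vertex and $k+3$ in-pendants at each set vertex, which makes the argument uniform for all $k\ge 0$ and avoids any need to ``exclude'' gadget vertices from $S$---the only freedom in a minimum GO$k$A is which set vertices to include.
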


\begin{proof}
The problem is in NP, since for any given set of vertices $S$ of a digraph $D$, one can check in polynomial time that such set $S$ is indeed a global offensive $k$-alliance or not. We now describe a polynomial transformation of the EC3S problem to the GO$k$-A PROBLEM.

Consider a set of $A$ of exactly $n$ different elements, where $n$ is a multiple of three, and exactly $n$ subsets of $A$, such that every subset contains exactly $3$ elements of $A$ and every element occurs in exactly $3$ sets. Let $A=\{v_1,\dots,v_n\}$ and $U=\{U_1,\dots,U_n\}$ be the set of elements and the collection of subsets of elements of $A$, respectively. Let us construct a digraph $D$ as follows. For any element of $v_i\in A$ we create a vertex $v_i$ of $D$, and for any set of $U_i\in U$, we create a vertex $u_i$ of $D$. If an element $v_i$ occurs in a set $U_j$, then we add the arcs $(v_i,u_j)$ and $(u_j,v_i)$ (two opposite arcs). Now, for any vertex $v_i\in A$, we add $k+2$ vertices $v_{i,1},\dots,v_{i,k+2}$ and the arcs $(v_{i,1},v_i),\dots,(v_{i,k+2},v_i)$, and for any vertex $u_i$, we add $k+3$ vertices $u_{i,1},\dots,u_{i,k+3}$ and the arcs $(u_{i,1},u_i),\dots,(u_{i,k+3},u_i)$. We can easily note that the digraph constructed in this way is bipartite.

We shall now prove that deciding whether there are $\frac{n}{3}$ subsets in $U$ which cover the set $A$ is equivalent to prove that $D$ has global offensive $k$-alliance number equals to $\frac{n}{3}+n(2k+5)$.

We first assume that there are $\frac{n}{3}$ sets, without loss of generality say $U_1,\dots,U_{n/3}$, which cover the set $A$. Let $S$ be the set of vertices of $D$ given by the union of the sets $\{u_1,\dots,u_{n/3}\}$ and $\left\{v_{i,j},u_{l,q},\,:\,i,l\in \{1,\dots,n\},j\in\{1,\dots,k+2\},q\in\{1,\dots,k+3\}\right\}$. Note that any vertex $u_j$ with $j> n/3$ satisfies that $deg^-_S(u_j)=k+3=deg^-_{\overline{S}}(u_j)+k$. Moreover, since any vertex $v_i$ with $i\in\{1,\dots,n\}$ occurs in exactly one set $U_l$ with $l\in \{1,\dots,n/3\}$, it is satisfied that $deg^-_S(v_i)=k+3>k+2=deg^-_{\overline{S}}(v_i)+k$. Thus, $S$ is a GO$k$A in $D$, and so, $\gamma_{k}^{o}(D)\le \frac{n}{3}+n(2k+5)$.

On the other hand, let $S'$ be a $\gamma_k^o(D)$-set. Since any vertex $v_{i,j}$ and any vertex $u_{l,q}$ with $i,l\in \{1,\dots,n\}$, $j\in\{1,\dots,k+2\}$ and $q\in\{1,\dots,k+3\}$ has in-degree zero, we deduce that such vertices must belong to $S'$, which means
\begin{equation}\label{v-ij-u-lq}
|S'\cap \left\{v_{i,j},u_{l,q},\,:\,i,l\in \{1,\dots,n\},j\in\{1,\dots,k+2\},q\in\{1,\dots,k+3\}\right\}|=n(2k+5).
\end{equation}
Now, if there is a vertex $v_i$ for which $N^-_{S'}(v_i)\cap \{u_1,\dots,u_n\}=\emptyset$, then $deg^-_{S'}(v_i)=k+2<k+3=deg^-_{\overline{S'}}(v_i)+k$, which is not possible. Thus, any vertex $v_i$, with  $i\in\{1,\dots,n\}$ must have an in-neighbor in $S'\cap \{u_1,\dots,u_n\}$. Let $t=|S'\cap \{u_1,\dots,u_n\}|$. Since every vertex $v_{i}$ has at least one in-neighbor in $S'\cap \{u_1,\dots,u_n\}$ and every vertex $u_{i}$ has three out-neighbors in $\{v_1,\dots,v_n\}$, we have
\begin{equation}\label{U-s}
3t\geq \sum_{i=1}^{n}|N^{-}_{S'\cap \{u_1,\dots,u_n\}}|\geq n.
\end{equation}
Therefore, by using (\ref{v-ij-u-lq}) and (\ref{U-s}), we deduce that $\gamma_{k}^{o}(D)\ge \frac{n}{3}+n(2k+5)$, which leads to the desired equality.

We now assume that $\gamma_{k}^{o}(D)=\frac{n}{3}+n(2k+5)$ and let $Q$ be a $\gamma_k^o(D)$-set. As stated while proving the previous implication, it must happen that
$$\left\{v_{i,j},u_{l,q},\,:\,i,l\in \{1,\dots,n\},j\in\{1,\dots,k+2\},q\in\{1,\dots,k+3\}\right\}\subset Q.$$
Moreover, we can similarly see that $|Q\cap \{u_1,\dots,u_n\}|\ge n/3$, and that every vertex in the set $\{v_1,\dots,v_n\}$ has at least one in-neighbor in $Q\cap \{u_1,\dots,u_n\}$. Since $\gamma_{k}^{o}(D)=\frac{n}{3}+n(2k+5)$, it must happen that $|Q\cap \{u_1,\dots,u_n\}|=n/3$, which leads to that every vertex in $\{v_1,\dots,v_n\}$ has exactly one in-neighbor in $Q\cap \{u_1,\dots,u_n\}$. Let $W=Q\cap \{u_1,\dots,u_n\}$ (note that $|W|=n/3$). If the sets of $C$ (without loss of generality say $C_1,\dots,C_{n/3}$), corresponding to the vertices of $W$, do not form an exact cover of $U$, then either there is an element of $A$ which is not in any set $C_1,\dots,C_{n/3}$ or there is an element of $A$ which belongs to two sets of $C_1,\dots,C_{n/3}$. Both situations lead to a contradiction with the fact that $|W|=n/3$ and every vertex $v_{i}$, $i\in \{1,\dots,n\}$, has exactly one in-neighbor in $W$.
Therefore, $C_1,\dots,C_{n/3}$ form an exact cover of the elements in $A$, and this completes the proof of this implication, and the desired reduction.
\end{proof}

As a consequence of the two theorems above, we obtain that computing the global offensive $k$-alliance number of digraphs in an NP-hard problem for any suitable value of $k$, and it is moreover NP-hard even when restricted to bipartite digraphs.

\section{Bounding the global offensive $k$-alliance number}

Since the problem of computing the global offensive $k$-alliance number of digraphs is NP-hard, it is then desirable to bound it for general digraphs. We begin with exhibiting a lower bound on $\gamma_{k}^{o}(D)$ for a general digraph $D$. In order to characterize all digraphs attaining the bound with $k=1$, we define the family $\Phi$ of digraphs as follows.
Suppose that $\widehat{D}$ is a digraph with the set of vertices $\{v_{1},\cdots,v_{n'},u_{1},\cdots,u_{p}\}$ such that\vspace{1mm}\\
(i) $(r'+1)n'\equiv0$ (mod $p$) and $(r'+1)n'/p\geq deg^{+}_{\widehat{D}}(v_{i})$, for each $1\leq i\leq n'$,\vspace{1mm}\\
(ii) the in-degrees of all vertices $v_{i}$ in $\widehat{D}\langle\{v_{1},\cdots,v_{n'}\}\rangle$ equal $r'$,\vspace{1mm}\\
(iii) $deg^{+}_{\widehat{D}}(u_{i})=0$ and $deg^{-}_{\widehat{D}}(u_{i})\geq2r'+1$, for each $1\leq i\leq p$.\vspace{1mm}\\
We now add $r=(r'+1)n'/p$ arcs from each $u_{i}$, $1\leq i\leq p$, to the vertices in $\{v_{1},\cdots,v_{n'}\}$ such that all vertices $v_{i}$ are incident to $r'+1$ such arcs. Let $D$ be the obtained digraph.

As an example, let $D$ be obtained from the complete biorientation of the cycle $C_{t}$ on vertices $v_{1}\cdots,v_{t}$ with $t\geq5$, by adding three new vertices $u_{1}$, $u_{2}$ and $u_{3}$ and the set of new arcs $\{(u_{i},v_{1}),\cdots,(u_{i},v_{t})\}_{i=1}^{3}\cup \{(v_{j},u_{1}),(v_{j},u_{2}),(v_{j},u_{3})\}_{j=1}^{5}$. Then, $D$ is a member of $\Phi$ with $(n',p,r',r)=(t,3,2,t)$, in which $\widehat{D}$ is the graph with $V(\widehat{D})=V(D)$ and $E(\widehat{D})=E(D)\setminus \{(u_{i},v_{1}),\cdots,(u_{i},v_{t})\}_{i=1}^{3}$.

\begin{theorem}\label{T2}
If $D$ is a digraph of order $n$, minimum in-degree $\delta^-$ and maximum in-degree $\Delta^-$, then
$$\gamma_{k}^{o}(D)\geq\left(\frac{k+\delta^{-}}{2\Delta^{+}+\delta^{-}+k}\right)n.$$
Moreover, for the case $k=1$, the equality in the bound follows if and only if $D\in \Phi$.
\end{theorem}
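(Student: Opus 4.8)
The plan is to derive the lower bound by a double-counting argument applied to the arcs that a minimum global offensive $k$-alliance sends into its complement, and then, for $k=1$, to read off the extremal digraphs by recording exactly when each inequality in that count is tight.

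For the bound itself, I would take a $\gamma_{k}^{o}(D)$-set $S$ and put $\overline{S}=V(D)\setminus S$. Counting the arcs of $(S,\overline{S})_{D}$ at their tails gives $|(S,\overline{S})_{D}|=\sum_{u\in S}deg_{\overline{S}}^{+}(u)\le\Delta^{+}|S|$, while counting at their heads, and using for each $v\in\overline{S}$ that $deg_{S}^{-}(v)\ge deg_{\overline{S}}^{-}(v)+k$ together with $deg_{S}^{-}(v)+deg_{\overline{S}}^{-}(v)=deg^{-}(v)\ge\delta^{-}$, gives $2\,deg_{S}^{-}(v)\ge\delta^{-}+k$, hence $|(S,\overline{S})_{D}|=\sum_{v\in\overline{S}}deg_{S}^{-}(v)\ge\frac{\delta^{-}+k}{2}(n-|S|)$. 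Chaining the two estimates and solving the resulting linear inequality for $|S|$ yields the stated bound; no characterization-specific work is needed here.

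For the $k=1$ characterization, the ``if'' direction is a direct verification: given $D\in\Phi$ with parameters $(n',p,r',r)$, the set $S=\{u_{1},\dots,u_{p}\}$ has $deg_{S}^{-}(v_{i})=r'+1$ and $deg_{\overline{S}}^{-}(v_{i})=r'$ for all $i$, so it is a GO$1$A with $N^{+}[S]=V(D)$; since $\delta^{-}(D)=2r'+1$, $\Delta^{+}(D)=r$, and the construction's arc count gives $pr=(r'+1)n'$, the right-hand side of the bound collapses to $p=|S|$, so equality holds (and $\gamma_{1}^{o}(D)=p$). For the ``only if'' direction, suppose $\gamma_{1}^{o}(D)$ equals the bound and let $S$ be a $\gamma_{1}^{o}(D)$-set. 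Equality forces both displayed estimates to be tight: every $u\in S$ has $deg^{+}(u)=\Delta^{+}$ and $deg_{S}^{+}(u)=0$, so no arc of $D$ lies inside $S$; and every $v\in\overline{S}$ has $deg^{-}(v)=\delta^{-}$ with $deg_{S}^{-}(v)=deg_{\overline{S}}^{-}(v)+1$, so $\delta^{-}$ is odd, say $\delta^{-}=2r'+1$, and $deg_{S}^{-}(v)=r'+1$, $deg_{\overline{S}}^{-}(v)=r'$. Setting $\widehat{D}=D-(S,\overline{S})_{D}$ and $r=\Delta^{+}$, one then checks directly that the vertices of $\overline{S}$ have in-degree $r'$ inside $\widehat{D}\langle\overline{S}\rangle$ (condition (ii)), that each vertex of $S$ has out-degree $0$ and in-degree at least $2r'+1$ in $\widehat{D}$ (condition (iii)), and that counting $(S,\overline{S})_{D}$ both ways gives $(r'+1)|\overline{S}|=\Delta^{+}|S|$, so $|S|$ divides $(r'+1)|\overline{S}|$ and $(r'+1)|\overline{S}|/|S|=\Delta^{+}\ge deg_{\widehat{D}}^{+}(v_{i})$ (condition (i)); since $D$ is recovered from $\widehat{D}$ by restoring, for each $u\in S$, its $r$ outgoing arcs of $(S,\overline{S})_{D}$, with each $v\in\overline{S}$ receiving $r'+1$ of them, we conclude $D\in\Phi$.

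I expect the ``only if'' direction to be the main obstacle: one must extract all three defining properties of $\Phi$ from the mere tightness of the two counts --- in particular that $S$ is arc-independent, that each in-neighborhood of an $\overline{S}$-vertex splits as $r'+1$ inside $S$ and $r'$ outside, and that the double-count identity $(r'+1)|\overline{S}|=\Delta^{+}|S|$ supplies the divisibility in (i); the small parity point that $\delta^{-}$ is necessarily odd, so that $r'=(\delta^{-}-1)/2$ is a genuine integer, also needs to be noted.
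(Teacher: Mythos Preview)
Your proposal is correct and follows essentially the same double-counting approach as the paper. The only cosmetic difference is in how the bound is derived: you add the two per-vertex inequalities $deg_{S}^{-}(v)\ge deg_{\overline{S}}^{-}(v)+k$ and $deg^{-}(v)\ge\delta^{-}$ to obtain $2\,deg_{S}^{-}(v)\ge\delta^{-}+k$ directly, whereas the paper arranges the same ingredients as a chain $\Delta^{+}|S|\ge|(S,\overline{S})_{D}|\ge(k+\delta^{-})|\overline{S}|-\Delta^{+}|S|$; the tightness analysis and the matching against the defining conditions (i)--(iii) of $\Phi$ are then identical in both arguments.
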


\begin{proof}
Let $S$ be a $\gamma_{k}^{o}(D)$-set. We have
\begin{equation}\label{EQ12}
\begin{array}{lcl}
\Delta^{+}|S|&\geq&|(S,\overline{S})_{D}|=\sum_{v\in\overline{S}}deg_{S}^{-}(v)\geq \sum_{v\in\overline{S}}(deg_{\overline{S}}^{-}(v)+k)\\[0.3cm]
&=&k|\overline{S}|+\sum_{v\in\overline{S}}deg^{-}(v)-\sum_{v\in\overline{S}}deg_{S}^{-}(v)\geq(k+\delta^{-})|\overline{S}|-\Delta^{+}|S|.
\end{array}
\end{equation}
Therefore, the bound can be deduced from the above. We next consider the case $k=1$, which particularly means $\gamma_{1}^{o}(D)\geq\left(\frac{1+\delta^{-}}{2\Delta^{+}+\delta^{-}+1}\right)n$, and present the characterization of the digraphs achieving the equality in this situation.

Suppose that the lower bound holds with the equality for a digraph $D$. Hence, all the inequalities in (\ref{EQ12}) necessarily hold with equality. In particular, this means $\sum_{v\in\overline{S}}deg^{-}(v)=\delta^{-}|\overline{S}|$, which is equivalent to say that the in-degrees $deg^{-}(v)=deg_{S}^{-}(v)+deg_{\overline{S}}^{-}(v)$ of all vertices in $v\in V(D')$ equal $\delta^{-}$, where $D'$ is the subdigraph induced by $\overline{S}$. Moreover, $deg_{S}^{-}(v)=deg_{\overline{S}}^{-}(v)+1$ (note that $k=1$) for all $v\in V(D')$, by equality in the second inequality in (\ref{EQ12}). Therefore, all vertices in $V(D')$ have the same in-degree, say $r'$, in the subgraph induced by $V(D')$. On the other hand, every vertex in $S$ is adjacent to precisely $\Delta^{+}$ vertices of $D'$ since $\Delta^{+}|S|=|(S,\overline{S})_{D}|$. Now, since $deg_{S}^{-}(v)=deg_{\overline{S}}^{-}(v)+1=r'+1$ for all $v\in V(D')$, and $\Delta^{+}|S|=|(S,\overline{S})_{D}|$, we have that $|V(D')|(r'+1)=|S|\Delta^{+}$. Thus, the membership of $D$ in $\Phi$ easily follows by choosing $|V(D')|$, $D-(S,\overline{S})_{D}$, $\Delta^{+}$ and $S$ for $n'$, $\widehat{D}$, $r$ and the set $\{u_{1},\cdots,u_{p}\}$, respectively, in the description of $\Phi$. Thus, $D\in \Phi$.

Conversely, Let $D\in \Phi$. It can be observed that $\{u_{1},\dots,u_{p}\}$ is a GO$1$A in $D$. Moreover, $(n,\delta^{-},\Delta^{+})=(n'+p,2r'+1,n'(r'+1)/p)$. Thus, $\gamma_{1}^{o}(D)\leq p=(1+\delta^{-})n/(2\Delta^{+}+\delta^{-}+1)$. This completes the proof.
\end{proof}

As a result of the lower bound in Theorem \ref{T2} we have the following.

\begin{corollary}\label{Cor1} For any graphs $G$ of order $n$, minimum degree $\delta$ and maximum degree $\Delta$, $\gamma_{k}^o(G)\geq\lceil(k+\delta)n/(2\Delta+\delta+k)\rceil$.
\end{corollary}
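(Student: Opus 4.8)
The plan is to derive Corollary \ref{Cor1} directly from the lower bound in Theorem \ref{T2} by specializing it to the complete biorientation of $G$. First I would let $D$ be the complete biorientation of $G$, and observe that this digraph satisfies $\Delta^{+}(D)=\Delta^{-}(D)=\Delta(G)=\Delta$ and $\delta^{+}(D)=\delta^{-}(D)=\delta(G)=\delta$, since every edge of $G$ becomes a pair of opposite arcs, so the in-degree and out-degree of each vertex in $D$ both equal its degree in $G$. The order of $D$ is of course $n$, the same as that of $G$. Thus the integer interval $\{2-\Delta^{-}(D),\dots,\Delta^{-}(D)\}$ coincides with $\{2-\Delta(G),\dots,\Delta(G)\}$, so the hypothesis $k\in\{2-\Delta,\dots,\Delta\}$ is exactly what is needed to apply Theorem \ref{T2} to $D$.

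Next I would invoke Theorem \ref{T2} applied to $D$, which gives
$$\gamma_{k}^{o}(D)\geq\left(\frac{k+\delta^{-}(D)}{2\Delta^{+}(D)+\delta^{-}(D)+k}\right)n=\left(\frac{k+\delta}{2\Delta+\delta+k}\right)n.$$
Then I would apply Proposition \ref{graph-vs-digraph}, which asserts $\gamma_{k}^{o}(G)=\gamma_{k}^{o}(D)$, to transfer this bound back to the graph $G$, obtaining $\gamma_{k}^{o}(G)\geq\left(\tfrac{k+\delta}{2\Delta+\delta+k}\right)n$. Finally, since $\gamma_{k}^{o}(G)$ is an integer, the bound can be rounded up to give $\gamma_{k}^{o}(G)\geq\lceil(k+\delta)n/(2\Delta+\delta+k)\rceil$, which is the claimed inequality.

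There is essentially no obstacle here: the whole argument is a routine consequence of the two earlier results, and the only mild point to be careful about is verifying that the degree parameters of the complete biorientation behave as claimed (both directed degrees equal the undirected degree), so that the substitution into the formula of Theorem \ref{T2} is legitimate and the admissible range of $k$ matches. One could also remark that the corollary slightly improves, via the ceiling, the lower bound on $\gamma_{o}(G)=\gamma_{1}^{o}(G)$ from \cite{sr} mentioned in the introduction, by taking $k=1$; but that comparison is a comment rather than part of the proof.
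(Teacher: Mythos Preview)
Your proof is correct and follows essentially the same approach as the paper: take the complete biorientation $D$ of $G$, observe that all directed degree parameters coincide with the corresponding undirected ones so that Theorem~\ref{T2} applies, and use $\gamma_{k}^{o}(G)=\gamma_{k}^{o}(D)$ (Proposition~\ref{graph-vs-digraph}) to transfer the bound. Your explicit justification of the ceiling via integrality is a small detail the paper leaves implicit.
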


\begin{proof}
Let $D$ be the complete biorientation of $G$. It is then straightforward to note that $|V(D)|=n$, $\delta^{+}(D)=\delta^{-}(D)=\delta$, $\Delta^{+}(D)=\Delta^{-}(D)=\Delta$, and $\gamma_{k}^{o}(D)=\gamma_k^{o}(G)$. Now the result follows from Theorem \ref{T2}.
\end{proof}

For the particular case of $k=1$, Sigarreta and Rodr\'{i}guez-Vel\'azquez \cite{sr} proved that
\begin{equation}\label{EQ4}
\gamma_1^{o}(G)\geq\left\{\begin{array}{lll}
\lceil(1+\delta)n/(2\Delta+\delta+1)\rceil,&\mbox{if $\delta$ is odd,}\vspace{1.5mm}\\
\lceil n\delta/(2\Delta+\delta)\rceil,&\mbox{otherwise.}
\end{array}
\right.
\end{equation}

Since $\lceil(1+\delta)n/(2\Delta+\delta+1)\rceil\geq\lceil n\delta/(2\Delta+\delta)\rceil$, Corollary \ref{Cor1} is an improvement of the lower bound given in (\ref{EQ4}).

We next continue with an upper bound on $\gamma_{k}^{o}(D)$.

\begin{theorem}\label{T3}
Let $D$ be a bipartite digraph of order $n$, let $k\in\{2-\Delta^-(D), . . . , \Delta^-(D)\}$, and let $n_{<k}$ be the number of vertices of in-degree less than $k$ in $D$. Then,
$$\gamma_{k}^{o}(D)\leq \frac{n+n_{<k}}{2},$$
and this bound is sharp.
\end{theorem}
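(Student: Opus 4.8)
The plan is to exploit the bipartite structure by a partition-and-swap argument. Let $D$ be a bipartite digraph with underlying bipartition $V(D)=X\cup Y$, and let $B$ denote the set of vertices of in-degree less than $k$, so $|B|=n_{<k}$. The key observation is that a vertex of in-degree less than $k$ can never satisfy the alliance inequality $deg_S^-(v)\ge deg_{\overline S}^-(v)+k$ (since $deg_S^-(v)\le deg^-(v)<k$ while $deg_{\overline S}^-(v)\ge 0$), so \emph{every} global offensive $k$-alliance must contain all of $B$. Hence I want to build a GO$k$A of the form $B\cup T$ with $|T|\le (n-n_{<k})/2$. First I would restrict attention to $V(D)\setminus B$, whose vertices all have in-degree at least $k$. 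I would then consider the two candidate sets $S_X=(X\setminus B)\cup B$ and $S_Y=(Y\setminus B)\cup B$, and argue that at least one of them is a GO$k$A; this handles the count since $|X\setminus B|+|Y\setminus B|=n-n_{<k}$, so the smaller of the two has size at most $(n-n_{<k})/2$, giving $|B\cup S_{\text{smaller}}|\le n_{<k}+(n-n_{<k})/2=(n+n_{<k})/2$.

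The core of the argument is verifying the alliance condition. For $S_X$, every vertex $v\in\overline{S_X}\subseteq Y\setminus B$ has all of its in-neighbors lying in $X$ (bipartiteness), hence $deg_{\overline{S_X}}^-(v)\le deg^-_{B}(v)$, while $deg_{S_X}^-(v)\ge deg^-(v)-deg^-_B(v)$; I would need $deg^-(v)-deg^-_B(v)\ge deg^-_B(v)+k$. This is \emph{not} automatic, so the honest approach is the following dichotomy: examine whether the "more than half the in-neighbors in $X$" condition holds uniformly. If for some reason neither $S_X$ nor $S_Y$ works directly, I would instead prove the bound by a counting/averaging argument — take a minimum GO$k$A $S$, note $B\subseteq S$, and show that for the bipartite graph one of $S\cap X$, $S\cap Y$ can be enlarged to $X$ (resp. $Y$) while the other is discarded, preserving the alliance property, because within a bipartite digraph replacing $\overline S$ by a subset of one side only \emph{increases} every in-neighbor count from $S$ and \emph{decreases} the count from $\overline S$. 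Concretely: if $S$ is a GO$k$A and $v\in\overline S$ lies in $Y$, then moving all of $X\setminus S$ into the alliance can only help $v$, and symmetrically; so $(X\cup B)$ and $(Y\cup B)$ are both GO$k$A's outright (every vertex outside the first lies in $Y$ and has all in-neighbors inside $X\cup B$, hence $deg_{\overline S}^-(v)=0<k\le deg^-(v)=deg_S^-(v)$, which settles $N^+[S]=V(D)$ as well since $deg^-(v)\ge 1$). This is the clean version and I expect it to go through without the dichotomy.

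Putting it together: both $X\cup B$ and $Y\cup B$ are global offensive $k$-alliances (the verification above uses only bipartiteness and $v\notin B\Rightarrow deg^-(v)\ge k\ge 1$), so
$$
\gamma_k^o(D)\le\min\{|X\cup B|,|Y\cup B|\}\le\frac{|X\cup B|+|Y\cup B|}{2}=\frac{n+n_{<k}}{2},
$$
where the last equality holds because $B$ is counted in both terms and $X,Y$ partition $V(D)$. For sharpness I would exhibit a small family: e.g. take the complete biorientation of a balanced complete bipartite graph $K_{m,m}$ (so $n_{<k}$ can be tuned by the choice of $k$ relative to $m$), or a disjoint union of directed edges, and check that no GO$k$A can be smaller than $(n+n_{<k})/2$ there; I expect the complete biorientation of $K_{m,m}$ with $k$ near $m$ to be the easiest witness.

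The main obstacle I anticipate is purely bookkeeping around the in-degree-$<k$ vertices and the condition $N^+[S]=V(D)$: one must be careful that a vertex $v\notin B$ outside the chosen side really does have $deg^-(v)\ge 1$ (true since $deg^-(v)\ge k$ and $k\ge 2-\Delta^-\ge\,$... — here the edge case $k\le 0$ needs a separate sentence noting that then $v\notin B$ still forces $deg_S^-(v)\ge deg_{\overline S}^-(v)+k$ because $deg_{\overline S}^-(v)=0$ and $deg_S^-(v)=deg^-(v)\ge 0\ge k$, while $N^+[S]=V(D)$ requires $deg^-(v)\ge 1$, which may fail for in-degree-$0$ vertices — but such vertices are in $B$). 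So the statement is genuinely clean once $B$ is absorbed, and the two-sided choice does the rest.
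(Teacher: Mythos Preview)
Your core argument is exactly the paper's: the set $Y\cup B$ (equivalently $V(D)\setminus X'$ in the paper's notation, where $X'=X\setminus V_{<k}$) is a GO$k$A because every vertex outside it lies in $Y\setminus B$, has all in-neighbors in $X\subseteq Y\cup B$, and has in-degree at least $k$; then choosing the smaller side yields the bound. The detours through the first $S_X,S_Y$ attempt and the ``swap'' discussion are unnecessary --- the ``clean version'' you eventually reach is the paper's one-paragraph proof verbatim.

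There is one genuine slip in your edge-case handling. For $k\le 0$ you assert that in-degree-zero vertices lie in $B$, but $B=\{v:deg^-(v)<k\}$, so when $k\le 0$ a source has $deg^-(v)=0\ge k$ and is \emph{not} in $B$; the domination requirement $N^+[S]=V(D)$ can then genuinely fail for such a vertex sitting outside the chosen side. (The paper's own proof silently glosses over this same point, so your argument is no less complete than theirs --- but your attempted patch is incorrect as written.) For sharpness, your instinct about the complete biorientation of $K_{m,m}$ with $k$ near $m$ is on target: the paper's explicit witness takes the complete biorientation of $K_{p,p}$ with $k\le p\le 2k-1$, deletes a $k\times k$ block of opposite arc-pairs to force $n_{<k}=2k$, and then verifies equality by a short case analysis.
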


\begin{proof}

We consider $V_{<k}$ as the set of all vertices of in-degree less than $k$. Let $X$ and $Y$ be the partite sets of $D$ and $X'=X\setminus V_{<k}$ and $Y'=Y\setminus V_{<k}$. Moreover, we may assume that $|X'|\geq|Y'|$. The above argument guarantees that each vertex in $X'$ has at least $k$ in-neighbors and all such in-neighbors belong to $Y$, necessarily. Therefore, $V(D)\setminus X'$ is a GO$k$A in $D$. Therefore,
\begin{equation}\label{EQ1}
\gamma_{k}^{o}(D)\leq n-|X'|\leq n-\frac{|X'|+|Y'|}{2}=n-\frac{|X|+|Y|-n_{<k}}{2}=\frac{n+n_{<k}}{2}.
\end{equation}

The sharpness of the upper bound can be seen as follows. We begin with the complete biorientation $D'$ of the complete bipartite graph $K_{p,p}$ with partite sets $X=\{x_{1},\cdots,x_{p}\}$ and $Y=\{y_{1},\cdots,y_{p}\}$ such that $k\leq p\leq 2k-1$. We obtain the digraph $D$ by removing the set of arcs $\{(x_{i},y_{j}),(y_{j},x_{i})\}_{i,j=1}^{k}$. This implies that $n_{<k}=2k$. Now, let $S$ be a $\gamma_{k}^{o}(D)$-set. Clearly, $\{x_{i},y_{i}\}_{i=1}^{k}\subseteq S$. If a partite set, say $X$, is a subset of $S$, then $|S|=|X|+|Y\cap S|\geq|X|+k=(n+n_{<k})/2$ which implies the equality in the upper bound. So, we may assume that both $X\setminus S$ and $Y\setminus S$ are nonempty. Let $t_{1}=|X\setminus S|$ and $t_{2}=|Y\setminus S|$. Suppose that $y\in Y\setminus S$. Since $deg^{-}_{S}(y)\geq deg^{-}_{\overline{S}}(y)+k$, we have $|X\cap S|\geq t_{1}+k$. Moreover, $|Y\cap S|\geq t_{2}+k$ by a similar fashion. Together the latest two inequalities imply $|S|\geq t_{1}+t_{2}+2k=n-|S|+n_{<k}$. Thus, $|S|\geq(n+n_{<k})/2$ which implies $\gamma_{k}^{o}(D)=(n+n_{<k})/2$ by (\ref{EQ1}). This completes the proof.
\end{proof}

One can observe that any directed tree is a bipartite digraph. So, as an immediate consequence of Theorem \ref{T3} we have the following result.

\begin{corollary}
Let $T$ be a directed tree of order $n$. Then, the following statements hold.\vspace{1mm}

\emph{(i)} $\gamma(T)\leq \lfloor(n+q)/2\rfloor$, where $q$ is the number of sources.\vspace{1mm}

\emph{(ii)} If $T$ is a rooted tree, then $\gamma(T)\leq \lceil n/2\rceil$. \emph{(\cite{l1})}
\end{corollary}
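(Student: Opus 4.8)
The plan is to obtain both statements as immediate consequences of Theorem~\ref{T3}, applied with $k=1$, combined with the elementary inequality $\gamma(D)\leq\gamma_1^o(D)$ already noted in the introduction. Recall that this inequality holds because the requirement $N^+[S]=V(D)$ in Definition~\ref{Def1} says exactly that a global offensive alliance is in particular a dominating set.

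First I would observe that any directed tree $T$ is a bipartite digraph (its underlying graph is a tree, hence bipartite), so Theorem~\ref{T3} is available. I would then specialize to $k=1$. This choice is legitimate precisely when $1\in\{2-\Delta^-(T),\dots,\Delta^-(T)\}$, i.e.\ when $\Delta^-(T)\geq1$, which holds whenever $T$ has at least one arc; the remaining case $n=1$ is trivial since then $\gamma(T)=1$ and both claimed bounds equal $1$. For $k=1$ the quantity $n_{<k}$ in Theorem~\ref{T3} counts the vertices of in-degree less than $1$, that is, the vertices of in-degree $0$, which is exactly the number $q$ of sources of $T$. Hence Theorem~\ref{T3} gives $\gamma_1^o(T)\leq (n+q)/2$, and combining this with $\gamma(T)\leq\gamma_1^o(T)$ yields $\gamma(T)\leq (n+q)/2$; since $\gamma(T)$ is an integer this improves to $\gamma(T)\leq\lfloor (n+q)/2\rfloor$, which is statement~(i). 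For statement~(ii), I would use that a rooted tree has a unique vertex of in-degree $0$, namely the root, so $q=1$; substituting into (i) gives $\gamma(T)\leq\lfloor (n+1)/2\rfloor=\lceil n/2\rceil$, recovering the bound from \cite{l1}.

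There is essentially no genuine obstacle here: the argument amounts to the bookkeeping identification of $n_{<1}$ with the number of sources, an integrality rounding, and the observation that a rooted tree has exactly one source. The only points deserving an explicit sentence are the admissibility of the value $k=1$ (covered by the trivial $n=1$ remark) and spelling out the already-observed chain $\gamma(T)\leq\gamma_1^o(T)$.
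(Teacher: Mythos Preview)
Your proposal is correct and follows exactly the approach intended by the paper, which presents this corollary as an immediate consequence of Theorem~\ref{T3}. Your added care in checking that $k=1$ is admissible (handling the trivial case $n=1$ separately) and in spelling out the chain $\gamma(T)\leq\gamma_1^o(T)\leq(n+q)/2$ together with the integrality rounding makes the argument complete.
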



\subsection{The specific case of (contra)functional digraphs and rooted trees whether $k=1$}

As an immediate consequence of the definitions given in the introduction, we have $\gamma(D)\leq \gamma_{1}^{o}(D)$, for any digraph $D$. In this section, we investigate these two digraph parameters for (contra)functional digraphs and rooted trees.\vspace{2mm}

\begin{proposition}\label{P4}
For any rooted tree or contrafunctional digraph $D$, $\gamma_{1}^{o}(D)=\gamma(D)$.
\end{proposition}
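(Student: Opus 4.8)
The plan is to show the reverse inequality $\gamma_{1}^{o}(D)\leq\gamma(D)$ by proving that \emph{every} dominating set of $D$ is in fact a global offensive $1$-alliance, which combined with the trivial inequality $\gamma(D)\leq\gamma_{1}^{o}(D)$ noted in the text yields equality. The key structural fact to exploit is that in both a rooted tree and a contrafunctional digraph, every vertex has in-degree exactly $1$ (for a contrafunctional digraph this is the definition; for a rooted tree it holds for every non-root vertex, and the root has in-degree $0$, so in particular no vertex has in-degree $\geq 2$). So first I would recall this and fix a dominating set $S\subseteq V(D)$.

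Next I would verify the two defining conditions of a GO$1$A for $S$. The closed-out-neighbourhood condition $N^{+}[S]=V(D)$: if $v\notin S$, then since $S$ dominates $D$ there is some $u\in S$ with $(u,v)\in A(D)$, i.e. $v\in N^{+}(u)\subseteq N^{+}[S]$; hence $N^{+}[S]=V(D)$. For the degree condition, take any $v\in\overline{S}$. Because $S$ dominates $v$, we have $\deg_{S}^{-}(v)\geq 1$. On the other hand $\deg^{-}(v)=\deg_{S}^{-}(v)+\deg_{\overline{S}}^{-}(v)\leq 1$ since every vertex has in-degree at most $1$ in $D$. Combining, $\deg_{S}^{-}(v)=1$ and $\deg_{\overline{S}}^{-}(v)=0$, so $\deg_{S}^{-}(v)=1=\deg_{\overline{S}}^{-}(v)+1=\deg_{\overline{S}}^{-}(v)+k$ with $k=1$. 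Thus $S$ is a GO$1$A.

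Therefore $\gamma_{1}^{o}(D)\leq|S|$ for every dominating set $S$, so $\gamma_{1}^{o}(D)\leq\gamma(D)$; together with $\gamma(D)\leq\gamma_{1}^{o}(D)$ this gives $\gamma_{1}^{o}(D)=\gamma(D)$. There is essentially no hard step here — the whole argument rests on the single observation that the maximum in-degree is $1$, which forces every vertex outside $S$ to have exactly one in-neighbour and it must lie in $S$; the only point needing a little care is handling the root of a rooted tree (in-degree $0$), but the root is never in $\overline{S}$ issues because any dominating set either contains it or... in fact the root could lie in $\overline{S}$ only if it were dominated, which is impossible since it has no in-neighbour, so the root always belongs to $S$ — this is worth stating explicitly to keep the case analysis clean, though it is not strictly needed for the inequality above.
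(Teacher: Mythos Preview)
Your proposal is correct and follows exactly the same approach as the paper: both argue that every dominating set is already a GO$1$A because the maximum in-degree is $1$, with the root of a rooted tree necessarily lying in every dominating set. Your write-up is simply a more detailed unpacking of the paper's two-sentence proof.
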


\begin{proof}
Since every vertex in a contrafunctional digraph has in-degree one, every dominating set is a GO$1$A. Similarly, every dominating set is a GO$1$A in a rooted tree (the root of a rooted tree belongs to every dominating set and every GO$1$A). Therefore, $\gamma_{1}^{o}(D)\leq\gamma(D)$ which implies the equality.
\end{proof}

A vertex $y$ is {\em accessible} or {\em reachable} from $x$ if there is a path in $D$ from $x$ to $y$. Let $R(x)$ be the set of all vertices accessible from $x$ and let $R^{-1}(x)$ be the set of all vertices from which $x$ is accessible.

\begin{lemma}\emph{(\cite{h})}\label{L1}
A digraph $D$ is functional if and only if each of its components consists of exactly one directed cycle $C$ and for each vertex $v$ of $C$, the converse of subgraph induced by $R^{-1}(v)$ of the digraph $D-C$ is a rooted tree with the root $v$.
\end{lemma}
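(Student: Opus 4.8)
The plan is to prove both directions of the equivalence by invoking the structural decomposition of functional digraphs and analyzing which vertices must be reachable from which. First I would prove the "only if" direction: suppose $D$ is functional. Then every vertex has out-degree exactly $1$, so $|A(D)|=|V(D)|$, which forces each weakly connected component of $D$ to contain exactly one cycle (a connected digraph with $|A|=|V|$ has a unique cycle in its underlying graph, and since every out-degree is $1$ this must be a directed cycle $C$; any other directed cycle would create a vertex of out-degree $\geq 2$). Call this cycle $C$. For a vertex $v$ of $C$, consider the vertices from which $v$ is reachable within $D-C$, i.e. $R^{-1}(v)$ computed in $D-C$. Removing the (single) arc of $C$ leaving $v$ leaves $v$ with out-degree $0$ in $D-C$, while every vertex of $R^{-1}(v)\setminus\{v\}$ still has out-degree $1$. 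I would argue that the subdigraph of $D-C$ induced by $R^{-1}(v)$ is acyclic (a directed cycle in it would again be a cycle of $D$ disjoint from or intersecting $C$, contradicting uniqueness of $C$ in the component), that its underlying graph is connected (every vertex reaches $v$), and hence is a directed tree; reversing all arcs, $v$ becomes the unique source of in-degree $0$ and every other vertex acquires in-degree $1$, which is exactly the definition of a rooted tree with root $v$.

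For the "if" direction, I would assume $D$ has the stated form and simply count out-degrees: each vertex of $C$ has out-degree $1$ within $C$; each non-cycle vertex $w$ lies in exactly one tree hanging off one cycle vertex $v$, and in the converse-is-a-rooted-tree picture $w$ has in-degree $1$, so in $D$ itself $w$ has out-degree $1$. One must check the vertices don't accrue extra out-arcs: since the components partition $V(D)$ and within a component all arcs are accounted for by $C$ together with the tree edges, no vertex has out-degree exceeding $1$, and none has out-degree $0$. Hence $D$ is functional. A small point to be careful about is the precise reading of "$R^{-1}(v)$ of the digraph $D-C$": I would state explicitly that $R^{-1}(v)$ is taken inside $D-C$ and that distinct cycle vertices $v$ give disjoint vertex sets whose union (together with $V(C)$) is the whole component, so the decomposition is an honest partition.

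The main obstacle I anticipate is the careful bookkeeping in the "only if" direction: showing that each component has \emph{exactly} one directed cycle and that the induced subdigraphs on the various $R^{-1}(v)$ are genuinely disjoint rooted trees whose union exhausts the component. The cycle-uniqueness argument is the crux — it rests on the fact that in a functional digraph the "successor" map $v\mapsto$ (unique out-neighbor of $v$) is a function on a finite set, so iterating it from any vertex eventually enters a unique cycle, and two distinct cycles in one component would force some vertex to have two successors. Once that is pinned down, assigning each non-cycle vertex to the cycle vertex it eventually maps into gives the partition, and the tree/rooted-tree structure then follows routinely from out-degrees being $1$ and acyclicity. Since this lemma is quoted from \cite{h}, I would keep the write-up brief, emphasizing the successor-function viewpoint and citing \cite{h} for the remaining details.
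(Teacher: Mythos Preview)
The paper does not supply a proof of this lemma at all: it is quoted verbatim from Harary~\cite{h} and left unproved, so there is nothing in the paper to compare your argument against. Your sketch is the standard successor-function argument and is correct; indeed you yourself note at the end that you would ``keep the write-up brief \dots\ and cit[e] \cite{h} for the remaining details,'' which is exactly what the paper does---it simply omits the write-up entirely.
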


\begin{theorem}\label{T4}
Let $D$ be a connected functional digraph of order $n$ with $q$ sources. Then, $$\gamma_{1}^{o}(D)\leq\left\lfloor\frac{n+q+1}{2}\right\rfloor.$$
Furthermore, this bound is sharp.
\end{theorem}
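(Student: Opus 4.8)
The plan is to derive the bound from two ingredients already in the paper: the structural description of functional digraphs in Lemma~\ref{L1}, and the bipartite upper bound of Theorem~\ref{T3} with $k=1$ — the latter applied not to $D$ directly, but to the directed tree obtained from $D$ by deleting one arc of its unique directed cycle.

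By Lemma~\ref{L1}, since $D$ is connected we may write $D$ as a directed cycle $C\colon v_1\to v_2\to\cdots\to v_c\to v_1$ together with vertex-disjoint trees $T_1,\dots,T_c$ hanging into $C$, where $T_i$ has root $v_i$ (so the converse of $T_i$ is a rooted tree with root $v_i$); the sets $V(T_i)$ partition $V(D)$, and the sources of $D$ are exactly the non-root leaves of the $T_i$. If $c$ is even, the underlying graph of $D$ is bipartite, so $D$ is a bipartite digraph and Theorem~\ref{T3} gives $\gamma_{1}^{o}(D)\le\lfloor(n+q)/2\rfloor\le\lfloor(n+q+1)/2\rfloor$. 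So assume $c$ is odd (hence $c\ge3$), relabelling the $v_i$ so that $T_1$ is nontrivial whenever some $T_i$ is. Form $D^*=D-(v_c,v_1)$. Then $D^*$ is a directed tree on $V(D)$ whose number of sources is $q^*=q$ if $T_1$ is nontrivial, while in the exceptional case $D=C$ the vertex $v_1$ becomes the only new source, so $q^*=1$ and $q=0$.

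Now apply the construction in the proof of Theorem~\ref{T3} to the bipartite digraph $D^*$ with $k=1$: let $X,Y$ be the parts of its bipartition, put $X'=X\setminus(\text{sources})$, $Y'=Y\setminus(\text{sources})$, and assume without loss of generality $|X'|\ge|Y'|$. This yields a global offensive alliance $S^*=V(D^*)\setminus X'$ of $D^*$ with $|S^*|=n-|X'|\le(n+q^*)/2$, where moreover $S^*$ contains every source of $D^*$ and all of $Y$. Since $D$ and $D^*$ differ only in the single in-arc $(v_c,v_1)$ of $v_1$, the alliance inequality evaluated in $D$ can fail only at $v_1$, so either $S^*$ or $S^*\cup\{v_1\}$ will be a global offensive alliance of $D$; the arithmetic closes because $c$ is odd, so in the tree $D^*$ the vertices $v_1$ and $v_c$ lie at even distance $c-1$, hence in the same part of the bipartition. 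I would then split into cases. (i) If $v_1\in S^*$ — which holds when $D=C$, and whenever $v_1$ lies in the part $Y$ — then $S^*$ is already a global offensive alliance of $D$, and $|S^*|\le\lfloor(n+q^*)/2\rfloor\le\lfloor(n+q+1)/2\rfloor$. (ii) Otherwise $v_1\in X'$; then $q^*=q$, and by the parity fact (together with $v_c$ being a non-source) also $v_c\in X'$. If $|X'|>|Y'|$, then $S^*\cup\{v_1\}$ is a global offensive alliance of $D$ — adding $v_1$ only helps every vertex other than $v_1$, while $v_c$'s in-neighbourhood is left untouched because $c\ge3$ — of size $n-|X'|+1\le\lceil(n+q)/2\rceil=\lfloor(n+q+1)/2\rfloor$, using $|X'|>|Y'|$ and $|X'|+|Y'|=n-q$. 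If instead $|X'|=|Y'|$, I switch to the symmetric set $V(D^*)\setminus Y'$: it is again a global offensive alliance of $D^*$, it contains $v_1$ (as $v_1\in X$), so case (i) applies to it, and its size $n-|Y'|=n-|X'|$ equals $(n+q)/2\le\lfloor(n+q+1)/2\rfloor$. In all cases $\gamma_1^o(D)\le\lfloor(n+q+1)/2\rfloor$.

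For sharpness I would exhibit the odd directed cycles $\vec{C}_{2s+1}$: every vertex has in-degree $1$ (and dominates only itself and its out-neighbour), so a set is a global offensive alliance if and only if it is a dominating set, whence $\gamma_1^o(\vec{C}_{2s+1})=\gamma(\vec{C}_{2s+1})=\lceil(2s+1)/2\rceil=s+1=\lfloor(n+q+1)/2\rfloor$ with $q=0$; appending pendant in-neighbours to suitable cycle vertices furnishes examples with $q>0$ as well. I expect the main obstacle to be exactly the obstruction that defeats the naive ``delete a source, add it to the alliance'' induction: that step costs one vertex but decreases $n+q$ by at most one, so it loses a unit precisely when $n+q$ is even. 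The parity of the odd cycle, together with the freedom to swap the two parts of $D^*$ when $|X'|=|Y'|$ so that $v_1$ falls inside the alliance for free, is what absorbs this loss.
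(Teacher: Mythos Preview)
Your proof is correct and takes a genuinely different route from the paper's. The paper argues directly: it removes the set $Q$ of sources, then repeatedly selects a farthest source of the remaining functional digraph and deletes the closed in-neighbourhood of its out-neighbour, until only the cycle $C$ (or a height-one functional digraph around $C$) survives; a global offensive alliance is then assembled from $Q$, the chosen out-neighbours, and roughly half the vertices of the residual piece, with a three-way case split on the shape of that piece. Your argument instead recycles Theorem~\ref{T3}: when the unique cycle has even length the digraph is already bipartite and the bound is immediate; when the cycle is odd you delete a single cycle arc $(v_c,v_1)$ to obtain a directed tree, invoke the explicit alliance $V\setminus X'$ from the proof of Theorem~\ref{T3}, and then repair the one vertex ($v_1$) whose in-neighbourhood changed, using the parity observation that $v_1$ and $v_c$ lie in the same bipartition class of the tree (and the freedom to swap $X'$ and $Y'$ when they are equinumerous) to absorb the potential $+1$ in cost. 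This buys you a shorter, more conceptual argument that leans on machinery already established in the paper; the paper's construction, by contrast, is self-contained and yields an explicit alliance without referring back to the bipartite bound. Your sharpness witness (odd directed cycles) is simpler than the paper's infinite family built from paths glued onto a cycle, but it suffices; the closing remark about pendant in-neighbours for $q>0$ is informal and would need more care to turn into a verified family, though sharpness is already established by the $q=0$ examples.
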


\begin{proof}
We consider a connected functional digraph $D$ in view of Lemma \ref{L1}. Let $C$ be the unique directed cycle of $D$. Let $Q$ be the set of all sources in $D$. Then $D'=D-Q$ is still a connected functional digraph with the unique directed cycle $C$. We define the height $h(D')$ of the connected functional digraph $D'$ as $\max\{d_{D'}(v,V(C))\mid v\in V(D')\}$ where $d_{D'}(v,V(C))$ represents the length of a shortest path between $v$ and a vertex of $V(C)$. Let $D'_{1}=D'$. We select a source $v_{1}$ with maximum distance from $C$ and let $u_{1}$ be its unique out-neighbor. Let $D'_{2}=D'_{1}-N^{-}_{D_{1}}[u_{1}]$. Iterate this process for the remaining connected functional digraph $D_{i}$ until $D'_{p}$ is the directed cycle $C$ on vertices $w_{1},\dots,w_{|V(C)|}$ or a connected functional digraph with height one. In fact, we have a partition $\{Q,N^{-}_{D_{1}}[u_{1}],\dots,N^{-}_{D_{p-1}}[u_{p-1}],V(D'_{p})\}$ of $V(D)$. If $D'_{p}$ is the directed cycle $C$, then
$$S_{1}=Q\cup\{u_{i}\}_{i=1}^{p-1}\cup\{w_{2j-1}\}_{j=1}^{\lfloor(|V(C)|+1)/2\rfloor}$$
is a GO$1$A in $D$. Let $D'_{p}$ contain the directed cycle $C$ an some arcs $(w'_{j},w_{j})$ for some $1\leq j\leq|V(C)|$. We observe that $D'_{p}-\{w'_{j},w_{j}\}_{j}$ is either a disjoint union of some directed paths $P_{r}$ on vertices $x_{1},\dots,x_{r}$, or it is empty. If it is empty, then
$$S_{2}=Q\cup\{u_{i}\}_{i=1}^{p-1}\cup \{w_{j}\}_{j=1}^{|V(C)|}$$
is a GO$1$A in $D$. So, we assume that $D'_{p}-\{w'_{j},w_{j}\}_{j}$ is not empty. Let $V_{e}$ be the set of vertices on the directed paths $P_{r}$ with even subscripts. Then,
$$S_{3}=Q\cup\{u_{i}\}_{i=1}^{p-1}\cup\{w_{j}\}_{j}\cup V_{e}$$
is a GO$1$A in $D$.

On the other hand, $p-1\leq (n-q-|V(D'_{p})|)/2$. Moreover, the cardinalities of the sets $\{w_{2j-1}\}_{j=1}^{\lfloor(|V(C)|+1)/2\rfloor}$, $\{w_{j}\}_{j=1}^{|V(C)|}$ and $\{w_{j}\}_{j}\cup V_{e}$ are bounded from above by $(|V(D'_{p})|+1)/2$ for $S_{1}$, $S_{2}$ and $S_{3}$, respectively. Therefore, for any $i\in\{1,2,3\}$ we have,
$$\gamma_{1}^{o}(D)\leq|S_{i}|\leq q+(n-q-|V(D'_{p})|)/2+(|V(D'_{p})|+1)/2=(n+q+1)/2.$$

To see the sharpness of the bound, consider a directed cycle $C$ on vertices $y_{1},y_{2},\dots,y_{t}$ and add disjoint directed paths $x_{i,1},\dots,x_{i,2k_{i}+1}$ for each $1\leq i\leq t$, for which $x_{i,2k_{i}+1}$ is adjacent to $y_{i}$. Let $D^{*}$ be the obtained connected functional digraph. It is now easy to observe that $\{x_{i,1},x_{i,3},\dots,x_{i,2k_{i}+1}\}_{i=1}^{t}\cup\{y_{1},y_{3},\dots,y_{2\lfloor(t+1)/2\rfloor-1}\}$ is a minimum dominating set in $D^{*}$ of cardinality $\lfloor\frac{n+q+1}{2}\rfloor$. Therefore, $\gamma_{1}^{o}(D^{*})=\lfloor\frac{n+q+1}{2}\rfloor$.
\end{proof}

Note that the difference between $\gamma_{1}^{o}(D)$ and $\gamma(D)$ can be arbitrary large, even for connected functional digraphs and directed trees as we can see in the following examples.
Let $b$ be an arbitrary positive integer. Let $D'$ be obtained from a directed cycle $C$ on vertices $v_{1},\dots,v_{2b}$ by adding new vertices $v'_{1},\dots,v'_{2b}$ and arcs $(v'_{1},v_{1}),\dots,(v'_{2b},v_{2b})$. Then, $\{v'_{1},\dots,v'_{2b}\}$ is the minimum dominating set in $D'$ while $\{v'_{1},\dots,v'_{2b}\}\cup\{v_{2i}\}_{i=1}^{b}$ is a minimum GO$1$A in $D'$. Thus, $\gamma_{1}^{o}(D')-\gamma(D')=b$.
We now let $T$ be a directed tree by removing one arc from the directed cycle $C$ of $D'$. It is easy to see that $\gamma_{1}^{o}(T)-\gamma(T)=3b-2b=b$.



\section{Concluding remarks}

We have introduced and begun with the study of several combinatorial and computational properties of the global offensive $k$-alliances in digraphs. The results presented above have allowed us to generate a new research line on the theory of digraphs which we pretend to continue exploring by possibly dealing with some and/or all the following open problems.

\begin{itemize}
  \item Similarly to the case of graphs, alliances can be analyzed not only from a global way, but also in a local way. That is, for a given digraph $D=(V(D),A(D))$, one can consider a set of vertices $S\subseteq V(D)$ as an offensive $k$-alliance in $D$ if $deg^{-}_{S}(v)\geq deg^{-}_{\overline{S}}(v)+k$ for all $v\in N_D(S)\setminus S$ (which is equivalent to say that $S$ is not necessarily a dominating set in $D$). The offensive $k$-alliance number, which could be denoted $a_{k}^{o}(D)$, is then defined as the minimum cardinality of an offensive $k$-alliance in $D$. The study of the not global case for an offensive $k$-alliance is of potential interest to continue this research line, which we have presented in this article.
  \item Another issue that requires to be dealt with concerns completing the NP-hardness property of computing the global offensive $k$-alliance number of digraphs. That is, finding which is the complexity of the GO$k$-A PROBLEM studied above for the negative suitable values of $k$. It would probably be not surprising that such problem belongs to the so-called NP-hard class (as for the other values already proved here), however, a proof of it is required. In addition, finding some classes of digraphs whether such problem could be polynomially solved will give more insight into the study of global offensive $k$-alliances in digraphs.
  \item Since the global offensive $k$-alliances can be used to model the situation of finding the most influential elements of a network, it is worthy of finding some algorithms (that could be even not polynomial) together with some heuristics that would allow to make some implementations and experiments on real social networks in order to detect the ``influencers'' (to be according to the social network terminology) of such networks.
\end{itemize}


\end{document}